\numberwithin{equation}{section}
\newtheorem{thm}{Theorem}[section]
\newtheorem{lem}[thm]{Lemma}
\newtheorem{cor}[thm]{Corollary}
\theoremstyle{definition}
\theoremstyle{remark}
\newtheorem{claim}[thm]{Claim}
\begin{document}

\title[On homogeneity of $\mathbb N^\tau$]
{On homogeneity of $\mathbb N^\tau$}

\author{A. Karassev}
\address{Department of Computer Science and Mathematics,
Nipissing University, 100 College Drive, P.O. Box 5002, North Bay,
ON, P1B 8L7, Canada} \email{alexandk@nipissingu.ca}

\author{E. Shchepin}
\address{Steklov Mathematical Institute of Russian Academy of Sciences,
8 Gubkina St. Moscow, 119991, Russia}
\email{scepin@yandex.ru}

\author{V. Valov}
\address{Department of Computer Science and Mathematics,
Nipissing University, 100 College Drive, P.O. Box 5002, North Bay,
ON, P1B 8L7, Canada} \email{veskov@nipissingu.ca}

\thanks{The first author was partially supported by NSERC Grant 257231-14}

\thanks{The third author was partially supported by NSERC
Grant 261914-19.}

 \keywords{compact sets, extension of homeomorphisms, powers of the irrationals, 0-dimensional spaces}

\subjclass{Primary 54C20, 54F45; Secondary 54B10, 54D30}


\begin{abstract}
It is shown that any homeomorphism between two compact subsets of $\mathbb N^\tau$ can be extended to an autohomeomorphism of $\mathbb N^\tau$.
\end{abstract}

\maketitle
\markboth{}{On homogeneity of $\mathbb N^\tau$}




\section{Introduction}
J. Pollard \cite{jp} established the following theorem: \textit{Let $X$ and $Y$ be complete, nowhere locally compact, zero-dimensional separable metric spaces, and let $P$ and $K$ be closed nowhere dense subsets of $X$ and $Y$, respectively. If $f$ is a homeomorphism between $P$ and $K$, then there exists a homeomorphism between $X$ and $Y$ extending $f$}.

Pollard's result is not anymore true for uncountable powers of the irrationals. For example, if $P$ and $K$ are two homeomorphic closed nowhere dense subsets of $\mathbb N^{\aleph_1}$  such that only one of them is $G_\delta$, then no homeomorphism between $P$ and $K$ admits an extension to an autohomeomorphism on $\mathbb N^{\aleph_1}$. But a non-metrizable analogue of Pollard's theorem remains valid for compact subsets of $\mathbb N^\tau$.
The technique developed in recent papers \cite{sv1} and \cite{sv2} allows to prove the following theorem, where $\pi_1:Y\times\mathbb N^\tau\to Y$ is the projection (by a 0-dimensional space we mean a Tychonoff space having a compactification of covering dimension 0).
\begin{thm} \label{analogue} Let $Y$ be a 0-dimensional Tychonoff space and $P$ and $K$ be compact subsets of $Y\times\mathbb N^\tau$ with $\tau\geq\aleph_0$. Then every homeomorphism $f:P\to K$ with $\pi_1\circ f=\pi_1|P$
 can be extended to a homeomorphism $\widetilde f:Y\times\mathbb N^\tau\to Y\times\mathbb N^\tau$ such that $\pi_1\circ\widetilde f=\pi_1$.
\end{thm}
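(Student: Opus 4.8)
Two easy remarks set things up. Since $\pi_1\circ f=\pi_1|P$ and $f$ is onto, $\pi_1(P)=\pi_1(K)$, so $P$ and $K$ lie over the same compact subset of $Y$. And since no non-empty open subset of $\mathbb N^\tau$ is contained in a compact set, $Y\times\mathbb N^\tau$ is nowhere locally compact, hence $P$ and $K$ are automatically nowhere dense; this is the analogue of the nowhere-density hypothesis in \cite{jp}, and the point where compactness, rather than merely being closed, is used. The plan is to deduce the theorem from Pollard's theorem \cite{jp} by the inverse-spectrum technique of \cite{sv1,sv2}. One first reduces to a compact base: if $bY$ is a compactification of $Y$ with $\dim bY=0$, then $P,K$ are compact in $bY\times\mathbb N^\tau$, and it suffices to extend $f$ over $bY\times\mathbb N^\tau$ — any such extension, being fibre-preserving, carries each slice $\{y\}\times\mathbb N^\tau$ onto itself, hence maps $Y\times\mathbb N^\tau$ onto $Y\times\mathbb N^\tau$, and its restriction is the desired extension. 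So assume $Y$ is zero-dimensional compact Hausdorff.

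Next one settles the case $\tau=\aleph_0$. Write $Y=\varprojlim\{Y_i,\pi^j_i\}$ as a limit of finite discrete spaces with surjective bonding maps, so $Y\times\mathbb N^{\aleph_0}=\varprojlim\{Y_i\times\mathbb N^{\aleph_0}\}$ with bonding maps $\pi^j_i\times\mathrm{id}$. By the spectral theorem of \cite{sv1,sv2}, on a closed cofinal family of indices $f$ descends to fibre-preserving homeomorphisms $f_i\colon p_i(P)\to p_i(K)$ with $p_i\circ f=f_i\circ p_i|P$; fix a cofinal chain $\{i_\xi\}$ of such indices. One builds compatible fibre-preserving autohomeomorphisms $\widetilde f_{i_\xi}$ of $Y_{i_\xi}\times\mathbb N^{\aleph_0}$ extending $f_{i_\xi}$, and then $\widetilde f=\varprojlim\widetilde f_{i_\xi}$ is the required map. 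At a limit stage $\widetilde f_{i_\xi}=\varprojlim_{\eta<\xi}\widetilde f_{i_\eta}$ automatically. At a successor stage the $\mathbb N^{\aleph_0}$-coordinate is untouched by the bonding map, so compatibility with $\pi^j_i\times\mathrm{id}$ together with fibre-preservation forces
\[
\widetilde f_{i_{\xi+1}}(w,t)=\bigl(w,\ \mathrm{pr}_2\,\widetilde f_{i_\xi}\bigl(\pi^{i_{\xi+1}}_{i_\xi}(w),\,t\bigr)\bigr),
\]
which one checks is a homeomorphism and, by the equivariance of the $f_i$, extends $f_{i_{\xi+1}}$ — so successor stages are automatic. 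Only the initial stage requires work: there $Y_{i_0}$ is finite, $Y_{i_0}\times\mathbb N^{\aleph_0}$ is a finite disjoint union of clopen copies of the Baire space $\mathbb N^{\aleph_0}$, the relevant restrictions of $P$ and $K$ are closed nowhere-dense subsets of those copies, and on each copy Pollard's theorem \cite{jp} (applicable since $\mathbb N^{\aleph_0}$ is a complete, nowhere locally compact, zero-dimensional, separable metric space) extends the corresponding piece of $f_{i_0}$.

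Finally one passes from $\aleph_0$ to arbitrary $\tau$. Using the closing-off afforded by \cite{sv1,sv2}, choose an increasing chain $\{A_\gamma\}_{\gamma<\tau}$ of subsets of $\tau$ with $A_{\gamma+1}\setminus A_\gamma$ countably infinite, $A_\gamma=\bigcup_{\eta<\gamma}A_\eta$ at limits, $\bigcup_{\gamma<\tau}A_\gamma=\tau$, and such that $f$ descends at each level; then $Y\times\mathbb N^\tau=\varprojlim\{Y\times\mathbb N^{A_\gamma}\}$. One builds compatible fibre-preserving autohomeomorphisms $\widetilde f_\gamma$ of $Y\times\mathbb N^{A_\gamma}$ extending the descent of $f$; limit stages are inverse limits, and the stage $\gamma=0$ is the case $\tau=\aleph_0$ over the (compact) base $Y$, already settled. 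At a successor stage, $\mathbb N^{A_{\gamma+1}}=\mathbb N^{A_\gamma}\times\mathbb N^{A_{\gamma+1}\setminus A_\gamma}$ and $\mathbb N^{A_{\gamma+1}\setminus A_\gamma}\cong\mathbb N^{\aleph_0}$; this time the bonding map collapses the new factor, so $\widetilde f_{\gamma+1}$ is not forced — its $(Y\times\mathbb N^{A_\gamma})$-part must equal $\widetilde f_\gamma$, but its behaviour in the new coordinates must be produced. Conjugating by $\widetilde f_\gamma\times\mathrm{id}$, this is exactly the task of extending a fibre-preserving homeomorphism between two compact subsets of $(Y\times\mathbb N^{A_\gamma})\times\mathbb N^{\aleph_0}$ to a fibre-preserving autohomeomorphism — i.e. the already-settled case $\tau=\aleph_0$ over the zero-dimensional base $Y\times\mathbb N^{A_\gamma}$. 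Then $\widetilde f=\varprojlim_\gamma\widetilde f_\gamma$ finishes the proof.

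The step I expect to be the real obstacle is the spectral bookkeeping hidden in ``$f$ descends on a closed cofinal family of levels'': one must represent $Y\times\mathbb N^\tau$, and the spaces $(Y\times\mathbb N^{A_\gamma})\times\mathbb N^{\aleph_0}$ above, as limits of inverse systems of the right type — $\sigma$-continuous, factorizing, with surjective bonding maps — so that the spectral theorem applies to the maps between the compact limits $P$ and $K$ and yields a closed cofinal set of levels on which $f$ descends to a homeomorphism commuting with $\pi_1$, and one must verify that the transfinite constructions glue at limit stages. This is precisely what the technique of \cite{sv1,sv2} supplies; granted it, the theorem reduces to Pollard's theorem together with the two observations above and the remark that extension in the ``base-refining'' direction is automatic.
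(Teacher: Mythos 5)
Your overall architecture---reduce to the case $\tau=\aleph_0$ over a $0$-dimensional base, then run a transfinite induction over an increasing chain $\{A_\gamma\}$ with countably infinite successor differences, each successor step being the $\aleph_0$-case over the new base $Y\times\mathbb N^{A_\gamma}$---is exactly the paper's (its Lemma 2.2 is your closing-off, and its final induction is your third paragraph). The gap is in the one step that carries all the content: the case $\tau=\aleph_0$ over a non-metrizable $0$-dimensional base. You propose to write $Y=\varprojlim Y_i$ with $Y_i$ finite discrete and to obtain, from the spectral theorem, a closed cofinal family of indices on which $f$ descends to $f_i\colon p_i(P)\to p_i(K)$. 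This fails: the spectrum $\{Y_i\times\mathbb N^{\aleph_0}\}$ indexed by finite clopen partitions of $Y$ is not $\sigma$-complete (a countable sup of finite partitions is only countable), so the spectral theorem does not apply to it, and descent to finite quotients is in fact simply false. Take $Y=C=\{0,1\}^{\aleph_0}$, embed $C$ in $\mathbb N^{\aleph_0}$, put $P=K=Y\times C$ and $f(y,c)=(y,y+c)$ (coordinatewise addition mod $2$). Then $f$ is a homeomorphism commuting with $\pi_1$, but the fibre homeomorphism depends injectively on $y$, so $f$ does not factor through $Y_i\times C$ for any finite quotient $Y_i$. A second, independent problem: even after passing to a correct ($\sigma$-complete, factorizing) spectrum of compact metrizable quotients, the directed index set generally contains no cofinal chain (already a strictly refining chain of finite clopen partitions has length at most $\omega$, and $[\omega_2]^{\le\omega}$ has no cofinal chain either), so ``fix a cofinal chain $\{i_\xi\}$'' is not available; and once the quotients are metrizable rather than finite, your initial stage is no longer a disjoint union of Baire spaces, so Pollard alone does not produce a \emph{fibre-preserving} extension there, which is what the rest of the induction needs.

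What is missing is precisely the paper's Lemma 2.1: for $X$ $0$-dimensional and $P,K$ compact in $X\times\mathbb N^{\aleph_0}$, a fibre-preserving homeomorphism $f:P\to K$ (over a homeomorphism $g$ of $X$) extends to a fibre-preserving autohomeomorphism. The paper proves this without decomposing $X$ at all: it considers the multivalued map $\Phi$ assigning to $x\in\pi_X(P)$ the set of all $h\in\mathcal H(\mathbb N^{\aleph_0})$ extending the fibre map of $f$ over $x$ (closed, and nonempty by Pollard applied to the single fibre---your correct instinct), shows $\Phi$ is lower semicontinuous, and applies Michael's zero-dimensional selection theorem in the complete metric space $\mathcal H(\mathbb N^{\aleph_0})$; the resulting selection, having compact metrizable image, then extends from $\pi_X(P)$ over all of $X$ because $X$ is $0$-dimensional. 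Some such device for gluing the fibrewise Pollard extensions continuously in $x$ cannot be bypassed by spectral descent over $Y$. The rest of your argument---the compactification of $Y$ at the top level, the observation that successor steps are forced and automatic for bonding maps of the form $q\times\mathrm{id}$, and the conjugation trick at successor stages of the $\tau$-induction---is sound, modulo the remark that the bases $Y\times\mathbb N^{A_\gamma}$ arising in the induction are not compact, so you need the $\aleph_0$-case for arbitrary $0$-dimensional bases, not only compact ones.
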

Theorem 1.1 implies a non-metrizable analogue of mentioned above Pollard's result \cite{jp}.
\begin{cor}
Let $P$ and $K$ be compact subsets of $\mathbb N^\tau$ with $\tau\geq\aleph_0$. Then every homeomorphism between $P$ and $K$ can be extended to a
homeomorphism of $\mathbb N^\tau$.
\end{cor}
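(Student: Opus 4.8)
The plan is to derive the corollary directly from Theorem~\ref{analogue} by specializing the parameter space $Y$ to a one-point space. First I would observe that a singleton $Y=\{*\}$ is a $0$-dimensional Tychonoff space, since it is its own compactification and has covering dimension $0$; hence it is an admissible choice for $Y$ in Theorem~\ref{analogue}. I would also fix once and for all the projection $\pi_2\colon\{*\}\times\mathbb N^\tau\to\mathbb N^\tau$ onto the second coordinate, which is a homeomorphism, and use it to identify $\{*\}\times\mathbb N^\tau$ with $\mathbb N^\tau$.

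Given a homeomorphism $g\colon P\to K$ between compact subsets $P,K$ of $\mathbb N^\tau$, I would set $P'=\{*\}\times P$ and $K'=\{*\}\times K$, which are compact subsets of $\{*\}\times\mathbb N^\tau$, and define $f\colon P'\to K'$ by $f(*,p)=(*,g(p))$. The compatibility hypothesis $\pi_1\circ f=\pi_1|P'$ of Theorem~\ref{analogue} is then automatic, because both maps are the constant map of $P'$ onto $*$. Applying Theorem~\ref{analogue} produces a homeomorphism $\widetilde f\colon\{*\}\times\mathbb N^\tau\to\{*\}\times\mathbb N^\tau$ extending $f$ (the requirement $\pi_1\circ\widetilde f=\pi_1$ is likewise vacuous). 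Transporting $\widetilde f$ back through $\pi_2$, the map $\widetilde g=\pi_2\circ\widetilde f\circ\pi_2^{-1}\colon\mathbb N^\tau\to\mathbb N^\tau$ is a homeomorphism with $\widetilde g(p)=\pi_2(\widetilde f(*,p))=\pi_2(*,g(p))=g(p)$ for every $p\in P$, so $\widetilde g$ extends $g$, as required.

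I do not expect any genuine obstacle here: the entire content of the corollary is already packaged inside Theorem~\ref{analogue}, and the only points that need a word of justification are that a one-point space qualifies as a $0$-dimensional $Y$ and that the coordinate-compatibility conditions degenerate to trivialities in this case. The sole care needed is the bookkeeping of the identification $\{*\}\times\mathbb N^\tau\cong\mathbb N^\tau$, which is entirely routine.
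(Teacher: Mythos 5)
Your proposal is correct and is exactly the paper's argument: the authors also deduce the corollary from Theorem~\ref{analogue} by taking $Y$ to be the one-point space, for which the projection condition is vacuous. The extra bookkeeping you supply for the identification $\{*\}\times\mathbb N^\tau\cong\mathbb N^\tau$ is routine and unobjectionable.
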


\section{Proof of Theorem 1.1}
For any space $X$ let $\mathcal H(X)$ denote the set of all autohomeomorphisms of $X$.
If $d$ be a bounded complete metric on $\mathbb N^{\aleph_0}$ we equip
$\mathcal H(\mathbb N^{\aleph_0})$  with the metric
$\widetilde d=\hat{d}(f,g)+\hat{d}(f^{-1},g^{-1})$, where $\hat{d}(f,g)=\sup\{d(f(x),g(x)):x\in\mathbb N^{\aleph_0}\}$. It is well known that
$\widetilde d$ is a complete metric on $\mathcal H(\mathbb N^{\aleph_0})$.

Next lemma is an analogue of \cite[Lemma 3.1]{sv1}.
\begin{lem}
Let $X$ be a 0-dimensional space and $P, K$ be compact subsets of $X\times\mathbb N^{\aleph_0}$. If $f:P\to K$ and $g\in\mathcal H(X)$ are homeomorphisms with $g\circ\pi_X=\pi_{X}\circ f$, then $f$ can be extended to a homeomorphism
$\widetilde f\in\mathcal H(X\times\mathbb N^{\aleph_0})$ such that $g\circ\pi_X=\pi_{X}\circ\widetilde f$.
\end{lem}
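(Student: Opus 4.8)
The plan is to follow the strategy of \cite[Lemma~3.1]{sv1}: writing $M:=\mathbb N^{\aleph_0}$ with a fixed bounded complete metric $d$, I would realize $\widetilde f$ as the limit of a Cauchy sequence of \emph{fibre-preserving} (i.e.\ commuting with $\pi_X$) autohomeomorphisms of $X\times M$, obtained by correcting the map on ever smaller clopen tubes around $P$ through a base-parametrized Knaster--Reichbach construction. First I would reduce to $g=\id$: since $g\times\id_M\in\mathcal H(X\times M)$, replacing $f$ by $(g^{-1}\times\id_M)\circ f$ and $K$ by $(g^{-1}\times\id_M)(K)$ makes $f$ fibre-preserving, and an extension $\widetilde f_0$ of the modified map yields $\widetilde f:=(g\times\id_M)\circ\widetilde f_0$ for the original one; I may also assume $P\neq\emptyset$. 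Three facts will be used repeatedly: (a) $M$ is nowhere locally compact, so the compacta $P$ and $K$ are nowhere dense in $X\times M$; (b) by compactness $P\subseteq X\times C$ and $K\subseteq X\times C'$ for products of finite sets $C=\prod_nF_n$ and $C'=\prod_nF'_n$; (c) $X$ being $0$-dimensional, the sets $U\times[\sigma]$, with $U$ clopen in $X$ and $[\sigma]$ a basic clopen subset of $M$, form a base of clopen sets for $X\times M$, and since $M\setminus C$ splits into infinitely many clopen copies of $M$, the space $X\times M$ contains infinitely many pairwise disjoint clopen copies of $X\times M$ that miss $P$ (and likewise for $K$).

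Next I would fix a completeness framework. Let $\mathcal H_\pi$ be the group of fibre-preserving autohomeomorphisms of $X\times M$, metrized by
\[
\rho(h,h')=\sup_{z\in X\times M}d\bigl(\pi_Mh(z),\pi_Mh'(z)\bigr)+\sup_{z\in X\times M}d\bigl(\pi_Mh^{-1}(z),\pi_M(h')^{-1}(z)\bigr).
\]
If $(h_n)\subseteq\mathcal H_\pi$ is $\rho$-Cauchy, then for each $x\in X$ the fibre maps $(h_n)_x$ form a $\widetilde d$-Cauchy sequence in the complete space $\mathcal H(M)$, hence converge; the resulting map on $X\times M$ is the uniform limit of the continuous $h_n$ with uniformly convergent inverses, so it again lies in $\mathcal H_\pi$. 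Thus $(\mathcal H_\pi,\rho)$ is complete, and it suffices to construct a $\rho$-Cauchy sequence $f_n\in\mathcal H_\pi$ with $\pi_Mf_n\to\pi_Mf$ uniformly on $P$ and $\pi_Mf_n^{-1}\to\pi_Mf^{-1}$ uniformly on $K$; its limit $\widetilde f$ then restricts to $f$ on $P$.

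The sequence $(f_n)$ would be built by iterating a correction step, applied alternately to improve $f_n$ near $P$ and $f_n^{-1}$ near $K$. In a step on the $P$-side, starting from $f_n$ with $f_n(W_n)=W'_n$ for clopen $W_n\supseteq P$ and $W'_n\supseteq K$, I would cover $P$ by finitely many clopen bricks $A=U\times[\sigma]\subseteq W_n$ with $\mathrm{diam}\,[\sigma]<2^{-n}$ and $\mathrm{diam}\,\pi_Mf(A\cap P)<2^{-n}$ (possible since $P$ is compact and $f$ continuous), disjointify them into a finite clopen partition $\{A_i\}$ of a clopen neighbourhood $W_{n+1}\subseteq W_n$ of $P$, and then — the heart of the matter — select a finite clopen partition $\{B_i\}$ of $f_n(W_{n+1})$ with $f(A_i\cap P)\subseteq B_i$, each $B_i$ of $\pi_M$-diameter $<2^{-n}$, together with a fibre-preserving homeomorphism $\varphi_i:A_i\to B_i$; finally I would set $f_{n+1}$ equal to $f_n$ off $W_{n+1}$ and to $\varphi_i$ on each $A_i$. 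Then $f_{n+1}\in\mathcal H_\pi$ (the $A_i$ together with $(X\times M)\setminus W_{n+1}$ form a clopen partition, and so do their images), $\rho(f_n,f_{n+1})$ is controlled by the $\pi_M$-diameters involved, and for every $p\in P$ the points $f_{n+1}(p)$ and $f(p)$ lie in a common $B_i$, so $\pi_Mf_n\to\pi_Mf$ on $P$; the $K$-side steps are symmetric, and the nested $W_n$, $W'_n$ keep the construction coherent.

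The main obstacle is precisely this matching: producing $\{B_i\}$ and the fibre-preserving homeomorphisms $\varphi_i:A_i\to B_i$. A clopen subset of $X\times M$ need not be fibre-preservingly homeomorphic to $U\times M$ for any clopen $U\subseteq X$, so the bricks cannot be paired merely by base and $\pi_M$-size; instead the domain and range partitions must be engineered simultaneously, and the infinitely many spare clopen copies of $X\times M$ coming from $M\setminus C$ and $M\setminus C'$ are exploited to absorb the combinatorial mismatch between the two sides. This is the $0$-dimensional, base-parametrized analogue of a Knaster--Reichbach cover, modelled on the construction in \cite{sv1}; carrying it out requires every selection to be made continuously (clopen) in the base point, and — in contrast with the classical metrizable situation — it never appeals to a countable neighbourhood base at $P$ (which need not be a $G_\delta$): the convergence is forced solely by the $\pi_M$-diameter bounds along $P$ and $K$, which is exactly why compactness of $P$ and $K$ is the right hypothesis.
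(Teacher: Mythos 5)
Your proposal has a genuine gap at exactly the step you yourself flag as ``the heart of the matter'': the production of the clopen partition $\{B_i\}$ of $f_n(W_{n+1})$ together with \emph{fibre-preserving} homeomorphisms $\varphi_i:A_i\to B_i$. This is not a routine refinement of the classical Knaster--Reichbach scheme; it is essentially the lemma itself in disguise. Even after you disjointify the bricks into rectangles $A_i=U_i\times S_i$, a fibre-preserving homeomorphism onto a clopen $B_i$ requires, for every $x\in U_i$, a homeomorphism of $S_i\cong\mathbb N^{\aleph_0}$ onto the fibre $B_i\cap(\{x\}\times\mathbb N^{\aleph_0})$ chosen \emph{continuously in $x$}, subject to the constraint $f(A_i\cap P)\subseteq B_i$ and to the $B_i$ tiling a prescribed clopen set. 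Pollard's theorem gives each such fibre homeomorphism individually, but nothing in your outline produces a continuous (in the base point) family of them; the ``infinitely many spare clopen copies of $X\times M$'' absorb a combinatorial mismatch of pieces, not the continuity-of-selection problem. Since $X$ is an arbitrary $0$-dimensional Tychonoff space (not metrizable, $P$ not a $G_\delta$), this selection issue cannot be waved away by a compactness or diameter argument, and as written the induction step cannot be completed.

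The paper isolates and solves precisely this difficulty by a different route: it defines the set-valued map $\Phi(x)=\{h\in\mathcal H(\mathbb N^{\aleph_0}):f(x,c)=(g(x),h(c))\text{ for all }c\in\pi_X^{-1}(x)\cap P\}$, uses Pollard's theorem to see $\Phi(x)\neq\varnothing$ with closed values in the complete metric space $(\mathcal H(\mathbb N^{\aleph_0}),\widetilde d)$, proves lower semicontinuity of $\Phi$ (this is where the clopen tubes $\widetilde U_i$, $\widetilde V_i$ and a second application of Pollard's theorem appear), and then invokes Michael's zero-dimensional selection theorem to get a continuous $\phi:P_X\to\mathcal H(\mathbb N^{\aleph_0})$, finally extending $\phi$ over all of $X$ because $\phi(P_X)$ is compact metrizable and hence an absolute extensor for $0$-dimensional spaces. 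If you want to salvage your approximation scheme, you would in effect have to prove this continuous-selection statement anyway, at which point the whole Cauchy-sequence machinery becomes unnecessary: the selection already yields $\widetilde f(x,c)=(g(x),\widetilde\phi(x)(c))$ in one step.
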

\begin{proof}
Obviously, $g(P_X)=K_X$, where $P_X=\pi_X(P)$ and $K_X=\pi_X(K)$.
Denote by $\pi:X\times\mathbb N^{\aleph_0}\to\mathbb N^{\aleph_0}$ the projection. For any $x\in P_X$ let $\Phi(x)$ be the set of all $h\in\mathcal H(\mathbb N^{\aleph_0})$ such that $f(x,c)=(g(x),h(c))$ for every $c\in\pi_X^{-1}(x)\cap P$. Since $f|(\pi_X^{-1}(x)\cap P)$ is a homeomorphism between the compact subsets $\pi((\{x\}\times\mathbb N^{\aleph_0})\cap P)$ and  $\pi((\{g(x)\}\times\mathbb N^{\aleph_0})\cap K)$
of $\mathbb N^{\aleph_0}$, the Pollard's theorem \cite{jp} cited above yields a
a homeomorphism $h_x\in\mathcal{H}(\mathbb N^{\aleph_0})$ extending $f|(\pi_X^{-1}(x)\cap P)$. Hence,
$\Phi(x)\neq\varnothing$ for all $x\in P_X$.
Moreover, the sets $\Phi(x)$ are closed in $\mathcal H(\mathbb N^{\aleph_0})$ equipped with the metric $\widetilde d$. So, we have a set-valued map
$\Phi:P_X\rightsquigarrow\mathcal H(\mathbb N^{\aleph_0})$. One can show that if $\Phi$ admits a continuous selection $\phi:P_X\to\mathcal H(\mathbb N^{\aleph_0})$,
then the map $f_1:P_X\times\mathbb N^{\aleph_0}\to K_X\times\mathbb N^{\aleph_0}$, defined by $f_1(x,c)=(g(x),\phi(x)(c))$, is a homeomorphism between $P_X\times \mathbb N^{\aleph_0}$ and $K_X\times \mathbb N^{\aleph_0}$ extending $f$ (see \cite[Proposition 2.6.11]{en}) with
$\pi_X\circ f_1=g\circ\pi_X$. On the other hand, since $X$ is 0-dimensional and $P_X$ is a compact subset of $X$, the map $\phi$ has a continuous extension $\widetilde\phi:X\to\mathcal H(\mathbb N^{\aleph_0})$. This is true because $\mathcal H(\mathbb N^{\aleph_0})$ is a metric space.
Indeed, then $\phi(P_X)$ is a compact subset of $\mathcal H(\mathbb N^{\aleph_0})$, hence $\phi(P_X)$ is itself a compact metric space. So,
it is an absolute extensor for 0-dimensional spaces in the sense of Chigogidze \cite{ch}, and we can extend $\phi$ to a map
$\widetilde\phi:X\to\mathcal H(\mathbb N^{\aleph_0})$.
 Next, let $\widetilde f:X\times\mathbb N^{\aleph_0}\to X\times\mathbb N^{\aleph_0}$ be the map defined by $\widetilde f(x,c)=(g(x),\widetilde\phi(x)(c))$. Then $\widetilde f$ is a homeomorphism extending $f$.
Therefore, according to Michael's \cite{em} zero-dimensional selection theorem, it suffices to show that $\Phi$ is lower semi-continuous.

To prove that, let $x^*\in P_X$ be a fixed point and $h^*\in\Phi(x^*)\cap W$, where $W$ is open in $\mathcal H(\mathbb N^{\aleph_0})$. We can assume that
$W$ is of the form $\{h\in\mathcal H(\mathbb N^{\aleph_0}): \{x^*\}\times h(U_i)=\{g(x^*)\}\times V_i{~}, i=1,2,..\}$, where $\{U_i\}_{i=1}^\infty$ and $\{V_i\}_{i=1}^\infty$ are clopen disjoint countable covers of
$\mathbb N^{\aleph_0}$.
Because $P(x^*)=(\{x^*\}\times\mathbb N^{\aleph_0})\cap P$ and $K(g(x^*))=(\{g(x^*)\}\times\mathbb N^{\aleph_0})\cap K$ are compact, there is $k$ such that $(\{x^*\}\times U_i)\cap P(x^*)\neq\varnothing$ and $(\{g(x^*)\}\times V_i)\cap K(g(x^*))\neq\varnothing$ if and only if $i\leq k$.

We extend each of the sets $\{x^*\}\times U_i$ and $\{g(x^*)\}\times V_i$, $i\leq k$, to clopen sets $\widetilde U_i\subset P_X\times\mathbb N^{\aleph_0}$ and $\widetilde V_i\subset K_X\times\mathbb N^{\aleph_0}$ such that
\begin{itemize}
\item[(1)] $\widetilde U_i=O(x^*)\times U_i$ and $\widetilde V_i=g(O(x^*))\times V_i$, where $O(x^*)$ is a clopen neighborhood of $x^*$ in $P_X$
such that $P(x)\subset\bigcup_{i=1}^k\widetilde U_i$ for all $x\in O(x^*)$;
\item[(2)] $O(x^*)$ is so small that $f(\widetilde U_i\cap P)\subset\widetilde V_i\cap K$.
\end{itemize}
We are going to show that for every $x\in O(x^*)$ there exists $h_x\in\Phi(x)\cap W$. We fix such $x$ and observe that all sets
$\widetilde U_i(x)=\widetilde U_i\cap(\{x\}\times\mathbb N^{\aleph_0})$ and $\widetilde V_i(x)=\widetilde V_i\cap(\{g(x)\}\times\mathbb N^{\aleph_0})$ are nowhere locally compact and complete. Moreover, $\widetilde U_i(x)\cap P$ and  $\widetilde V_i(x)\cap K$ are compact sets in $\widetilde U_i(x)$
and $\widetilde V_i(x)$, respectively, and $f^x_i=f|(\widetilde U_i(x)\cap P)$ is a homeomorphism between them for every $i\leq k$.
Hence, by Pollard's theorem \cite{jp}, there exist homeomorphisms $\widetilde f^x_i:\widetilde U_i(x)\to\widetilde V_i(x)$ extending $f^x_i$, $i\leq k$. For every $(x,c)\not\in\bigcup_{i=1}^k\widetilde U_i(x)$ there is exactly one $i>k$ with $c\in U_i$, and we define
$\widetilde f^x_i(x,c)=h^*(x^*,c)$.
The homeomorphisms $\widetilde f^x_i$, $i=1,2,..$, provide a homeomorphism $h_x'$ between $\pi_X^{-1}(x)$ and
$\pi_X^{-1}(g(x))$ extending $f|\pi_X^{-1}(x)\cap P$. Then the equality $h_x(c)=h_x'(x,c)$, $c\in\mathbb N^{\aleph_0}$, defines a homeomorphism
from $\mathcal{H}(\mathbb N^{\aleph_0})$ with $h_x\in\Phi(x)\cap W$. Therefore, $\Phi$ is lower semi-continuous.
\end{proof}

Everywhere below we suppose that $P,K$ are compact subsets of $Y\times \mathbb N^A$ and $f:P\to K$ is a homeomorphism such that
$\pi_1|P=\pi_1\circ f$, where $Y$ is a 0-dimensional space. A set $B\subset A$ is called {\em $f$-admissible} if
there exists a homeomorphism $f_B:P_B\to K_B$ such that  $(f_B\circ p_B)|P=p_B\circ f$ and
${_B}\pi_1|P_B={_B}\pi_1 \circ f_B$, where $\pi_B:\mathbb N^A\to\mathbb N^B$, $p_B:Y\times \mathbb N^A\to Y\times \mathbb N^B$,
and ${_B}\pi_1:Y \times \mathbb N^B\to Y$ denote the projections, $P_B=p_B(P)$ and $K_B=p_B(K)$.
\begin{lem}
If $A$ is an uncountable set, then for every $\alpha\in A$ there is an $f$-admissible countable set $B(\alpha)\subset A$ containing $\alpha$
\end{lem}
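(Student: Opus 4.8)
The plan is to build $B(\alpha)$ as the union of an increasing chain of countable subsets $B_0\subset B_1\subset\cdots$ of $A$, obtained by a standard Löwenheim–Skolem-style closing-off argument, so that the limit set $B(\alpha)=\bigcup_n B_n$ carries a well-defined homeomorphism $f_{B(\alpha)}\colon P_{B(\alpha)}\to K_{B(\alpha)}$ compatible with the projections. First I would fix a countable base for $\mathbb N^{\aleph_0}$ and exploit that $P$ and $K$ are compact: the key point is that $f$ and $f^{-1}$ are continuous maps between compact spaces, so for each basic clopen set in the target depending on coordinates in a countable set $C$, its $f$-preimage is clopen in $P$ and hence (by compactness of $P$ and the fact that $P\subset Y\times\mathbb N^A$ with the product topology) depends only on coordinates in some countable set $C'\supset C$ together with the $Y$-coordinate. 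Starting from $B_0=\{\alpha\}$, I would let $B_{n+1}$ be a countable set containing $B_n$ such that, for every basic clopen subset of $Y\times\mathbb N^{B_n}$, both $f$ and $f^{-1}$ "see" its preimage within $Y\times\mathbb N^{B_{n+1}}$; there are only countably many such basic sets at each stage, and each contributes a countable set of new coordinates, so $B_{n+1}$ stays countable.

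The next step is to verify that $B=B(\alpha)=\bigcup_n B_n$ is $f$-admissible. Because the closing-off is "stable" in the limit, the composition $p_B\circ f\colon P\to K_B$ is constant on fibers of $p_B|P$: if $p_B(z)=p_B(z')$ for $z,z'\in P$, then $z$ and $z'$ cannot be separated by any clopen subset of $Y\times\mathbb N^B$, hence (by the stability property) $f(z)$ and $f(z')$ cannot be separated by any clopen subset of $Y\times\mathbb N^B$ either, so $p_B(f(z))=p_B(f(z'))$. This yields a well-defined map $f_B\colon P_B\to K_B$ with $f_B\circ(p_B|P)=p_B\circ f$; it is continuous because $P$ is compact and $p_B|P$ is a quotient map onto $P_B$, and by the symmetric argument applied to $f^{-1}$ one gets a continuous inverse, so $f_B$ is a homeomorphism. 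Since $\pi_1|P=\pi_1\circ f$ and $\pi_1$ factors through $p_B$ as ${_B}\pi_1$, the identity ${_B}\pi_1|P_B={_B}\pi_1\circ f_B$ follows by chasing the diagram on the dense set $p_B(P)=P_B$. Finally $\alpha\in B_0\subset B$, so $B$ has the required property.

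The main obstacle is the coordinate-dependence lemma underlying the construction: making precise that a clopen subset of a compact subspace of $Y\times\mathbb N^A$ depends on only countably many coordinates (plus the $Y$-factor). This requires using compactness of $P$ and $K$ to reduce an open cover by basic cylinders to a finite subcover, reading off the finitely many coordinates involved, and then iterating over a countable base of the target; care is needed because $Y$ itself need not be second countable, but since $Y$ enters all the relevant maps only through $\pi_1$, which is already respected by $f$, the $Y$-coordinate never needs to be "refined" and only the $\mathbb N^A$-coordinates are closed off. Once this lemma is in hand, the rest is the routine bookkeeping of the chain construction sketched above.
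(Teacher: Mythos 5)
Your argument is correct and follows essentially the same route as the paper: a L\"owenheim--Skolem closing-off over a countable increasing chain, treating $f$ and $f^{-1}$ symmetrically, with compactness of $P$ and $K$ used both to bound the coordinate dependence of preimages of basic sets and to induce the quotient homeomorphism $f_B$ on the images. The only cosmetic difference is that the paper factors $h_1=\pi\circ f$ through $\sigma$-compact preimages of arbitrary basic open subsets of $\mathbb N^C$, whereas you work with clopen cylinders (where compactness gives finite coordinate dependence directly); you also correctly flag that only the $\mathbb N^A$-coordinates, and not the possibly non-second-countable factor $Y$, need to be closed off.
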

\begin{proof}
Since $\pi_1|P=\pi_1\circ f$, $f(y,x)=(y, h_1(y,x))$ for all $(y,x)\in P$, where $h_1:P\to\mathbb N^A$.
Similarly, $f^{-1}(y,x)=(y, h_2(y,x))$ for all $(y,x)\in K$ with $h_2:K\to\mathbb N^A$.

\begin{claim}
For every $i=1,2$ and a countable set
$C\subset A$, there is countable $D(i)\subset A$ containing $C$ and a continuous maps $g_1:P_{D(1)}\to\mathbb N^C$, $g_2:K_{D(2)}\to\mathbb N^C$
with
$(g_1\circ p_{D(1)})|P=\pi_C\circ h_1$ and $(g_2\circ p_{D(2)})|K=\pi_C\circ h_2$.
\end{claim}
Let $\mathcal B$ be a countable base for $\mathbb N^C$. Then $G_U=h_1^{-1}(\pi_C^{-1}(U))$ is a $\sigma$-compact open subset of $P$ for every $U\in\mathcal B$. So, there is a sequence $\{W_U(n)\}_{n\geq 1}$ of standard open sets in $Y\times \mathbb N^A$ such that $G_U$ is the union of all $W_U(n)\cap P$, $n\geq 1$. Therefore, for every $U$ there exists a countable set $C_U\subset A$ with
$p_{C_U}^{-1}(p_{C_U}(W_U(n)))=W_U(n)$, $n\geq 1$. We can assume that each $C_U$ contains $C$. Then $D(1)=\bigcup_{U\in\mathcal B}C_U$ is a countable set containing $C$ and $p_{D(1)}(y,x)=p_{D(1)}(y',x')$ implies $\pi_C(h_1(y,x))=\pi_C(h_1(y',x'))$, where $(y,x), (y',x')\in P$.
Because $P_{D(1)}$ is compact, this yields the existence of a map $g_1:P_{D(1)}\to\mathbb N^C$ with $(g_1\circ p_{D(1)})|P=\pi_C\circ h_1$.
Similarly, we can find a countable set $D(2)\subset A$ which contains $C$ and a map $g_2:K_{D(2)}\to\mathbb N^C$ satisfying the claim.

Using Claim 2.3, we construct by induction an increasing sequence $\{B(n)\}_{n\geq 0}$ of countable sets $B(n)\subset A$ and maps
$\varphi_n:P_{B(n+1)}\to \mathbb N^{B(n)}$ for $n=2k$ and $\psi_n:K_{B(n+1)}\to \mathbb N^{B(n)}$ for $n=2k+1$ such that
\begin{itemize}
\item $B(0)=\{\alpha\}$;
\item $\pi_{B(n)}\circ h_1=(\varphi_n\circ p_{B(n+1)})|P$ if $n=2k$;
\item $\pi_{B(n)}\circ h_2=(\psi_n\circ p_{B(n+1)})|K$ if $n=2k+1$.
\end{itemize}
Let $B(\alpha)=\bigcup_{n=0}^\infty B(n)$. Then we have maps $\varphi_{B(\alpha)}:P_{B(\alpha)}\to \mathbb N^{B(\alpha)}$ and
$\psi_{B(\alpha)}:K_{B(\alpha)}\to \mathbb N^{B(\alpha)}$
such that $\pi_{B(\alpha)}\circ h_1=(\varphi_{B(\alpha)}\circ p_{B(\alpha)})|P$ and
$\pi_{B(\alpha)}\circ h_2=(\psi_{B(\alpha)}\circ p_{B(\alpha)})|K$.
Observe that $P_{B(\alpha)}$ and $K_{B(\alpha)}$ are subsets of $Y\times\mathbb N^{B(\alpha)}$. Then $f_{B(\alpha)}:P_{B(\alpha)}\to Y\times\mathbb N^{B(\alpha)}$, defined by $f_{B(\alpha)}(y,x)=(y,\varphi_{B(\alpha)}(y,x))$ for every $(y,x)\in P_{B(\alpha)}$, is a homeomorphism between
$P_{B(\alpha)}$ and $K_{B(\alpha)}$ whose inverse is the map $g_{B(\alpha)}:K_{B(\alpha)}\to Y\times\mathbb N^{B(\alpha)}$, defined by $g_{B(\alpha)}(y,x)=(y,\psi_{B(\alpha)}(y,x))$ for $(y,x)\in K_{B(\alpha)}$. Therefore, $B(\alpha)$ is $f$-admissible.
\end{proof}

{\em Proof of Theorem $1.1$.}
We identify $\mathbb N^\tau$ with $\mathbb N^A$, where $A$ is a set of cardinality $\tau$.
The case $\tau=\aleph_0$ follows from Lemma 2.1.
So, let $A=\{\alpha:\alpha<\omega(\tau)\}$ be uncountable.
By Lemma 2.2, we can cover $A$ by a family $\{B(\alpha):\alpha<\omega(\tau)\}$ of countable $f$-admissible sets. Since any union of $f$-admissible sets is also $f$-admissible, from the family $\{B(\alpha):\alpha<\omega(\tau)\}$ we obtain an increasing family of $f$-admissible sets $A(\alpha)$ and homeomorphisms
$f_{A(\alpha)}:P_{A(\alpha)}\to K_{A(\alpha)}$ such that:

\begin{itemize}
\item[(3)]$A(1)$ is countable, and the cardinality of each $A(\alpha)$ is less than $\tau$;
\item[(4)]$A(\alpha)=\bigcup_{\beta<\alpha}A(\beta)$ if $\alpha$ is a limit ordinal;
\item[(5)]$A(\alpha+1)\backslash A(\alpha)$ is countable but infinite for all $\alpha$;
\item[(6)]$\displaystyle p_{A(\alpha)}\circ f=(f_{A(\alpha)}\circ p_{A(\alpha)})|P$.
\end{itemize}
We need to prove that each $f_{A(\alpha)}$ can be extended to a homeomorphism
$\widetilde f_{A(\alpha)}:Y\times \mathbb N^{A(\alpha)}\to Y\times\mathbb N^{A(\alpha)}$ such that
\begin{itemize}
\item[(7)] $p_{A(\alpha)}^{A(\alpha+1)}\circ\widetilde f_{A(\alpha+1)}=(\widetilde
f_{A(\alpha)}\circ p_{A(\alpha)}^{A(\alpha+1)})$;
\item[(8)] ${_\alpha}\pi_1={_\alpha}\pi_1\circ\widetilde f_{A(\alpha)}$, where ${_\alpha}\pi_1:Y\times \mathbb N^{A(\alpha)}\to Y$ denotes the projection.
\end{itemize}

The proof is by transfinite induction.
The first extension $\widetilde f_{A(1)}$ exists by Lemma 2.1 because $P_{A(1)}$ and $K_{A(1)}$ are compact subsets of $Y\times \mathbb N^{A(1)}$ and ${_1}\pi_1|P_{A(1)}={_1}\pi_1\circ f_{A(1)}$.
If $\beta$ is a limit ordinal and $\widetilde f_{A(\alpha)}$ is already defined for all $\alpha<\beta$, then item (4) implies the existence of
$\widetilde f_{A(\beta)}$. Therefore, we need only to define $\widetilde f_{A(\alpha+1)}$ provided $\widetilde f_{A(\alpha)}$ exists.

To this end, we apply again Lemma 2.1 for the space
$Y\times \mathbb N^{A(\alpha+1)}=Y\times \mathbb N^{A(\alpha)}\times\mathbb N^{A(\alpha+1)\backslash A(\alpha)}$, the sets
$P_{A(\alpha+1)}, K_{A(\alpha+1)}$, the projection
$\pi:Y\times \mathbb N^{A(\alpha)}\times\mathbb N^{A(\alpha+1)\backslash A(\alpha)}\to Y\times \mathbb N^{A(\alpha)}$, and the homeomorphisms $f_{A(\alpha+1)}$ and  $\widetilde f_{A(\alpha)}$. Moreover, $Y\times \mathbb N^{A(\alpha)}$ has a 0-dimensional compactification because both $Y$ and $N^{A(\alpha)}$ have such compactifications.
Hence, there is a homeomorphism
$\widetilde f_{A(\alpha+1)}\in\mathcal H(Y\times \mathbb N^{A(\alpha+1)})$ extending $f_{A(\alpha+1)}$ and satisfying condition $(7)$.
Since ${_\alpha}\pi_1 ={_\alpha}\pi_1\circ\widetilde f_{A(\alpha)}$, we also have ${_{\alpha+1}}\pi_1 ={_{\alpha+1}}\pi_1\circ\widetilde f_{A(\alpha+1)}$. $\Box$

{\em Proof of Corollary $1.2$.} The corollary is obtained from Theorem 1.1 by letting $Y$ to be the one-point space.   $\Box$




\begin{thebibliography}{999}



\bibitem{ch}
A.~Chigogidze, \textit{Inverse Spectra}, North Holland Mathematical Library 53, North Holland Publishing Co., Amsterdam, 1996.



\bibitem{en}
R.~Engelking, \textit{General Topology}, PWN, Warszawa 1977.







\bibitem{em}
E.~Michael, \textit{Continuous selections I}, Ann. Math. \textbf{63} (1956), 361--382.




\bibitem{jp}
J.~Pollard, \textit{On extending homeomorphisms on zero-dimensional spaces}, Fund. Math. \textbf{67} (1970), 39--48.




\bibitem{sv1}
E.~Shchepin and V.~Valov, \textit{On homogeneity of Cantor cubes}, Canadian Math. Bulletin, accepted (see also arXiv:2109.07214[math.GN]).

\bibitem{sv2}
E.~Shchepin and V.~Valov, \textit{Extending homeomorphisms on Cantor cubes}, arXiv:2111.05532[math.GN].







\end{thebibliography}
\end{document}